\DeclarePairedDelimiterX{\inp}[2]{\langle}{\rangle}{#1, #2}
\newtheorem{theorem}{Theorem}[section]
\newtheorem{lemma}[theorem]{Lemma}
\theoremstyle{definition}
\newtheorem{definition}[theorem]{Definition}
\newtheorem*{definition*}{Definition}
\theoremstyle{remark}
\newtheorem{remark}[theorem]{Remark}
\numberwithin{equation}{subsection}
\title{Shortest $k$-Geodesics on Hyperbolic Surfaces}
\author{Changjie Chen}
\begin{document}

\begin{abstract}
    We study the relationship between the lengths of closed geodesics on hyperbolic surfaces and their topological complexity, measured by the self-intersection number. In particular, we provide explicit upper bounds for the length $s_k(X)$ of a shortest closed geodesic with exactly $k$ self-intersections in terms of the length $L_\textswab{8}(X)$ of a shortest figure eight curve, improving Basmajian's estimate. We analyze the geometry of a shortest figure eight curve and explicitly build families of words in $\pi_1(X)$ whose geodesic representatives realize prescribed self-intersection numbers. As a consequence, we improve existing estimates on the maximal self-intersection number $I_k(X)$ of shortest geodesics with at least $k$ self-intersections, reducing the asymptotic upper bound from 512 to 128. This provides a sharper quantitative connection between the geometry and combinatorial complexity of non-simple closed geodesics on hyperbolic surfaces.
\end{abstract}

\maketitle

\section{Introduction}

The geometry of closed geodesics on hyperbolic surfaces has been a central topic in the study of surface geometry and dynamics. A classical problem asks how the length of a closed geodesic is related to its topological complexity, often measured through invariants such as its self-intersection number.

A fundamental object in this setting is the shortest figure eight curve, namely, a closed geodesic with a single geometric self-intersection. Buser \cite{buser2010geometry} proved that any shortest non-simple closed geodesic must be a figure eight curve.

For $k\ge0$, a \emph{$k$-geodesic} on a hyperbolic surface $X$ is a closed geodesic with exactly $k$ geometric self-intersections. Let $s_k(X)$ be the length of a shortest $k$-geodesic on $X$, and let $L_{\textswab{8}}(X)$ be the length of a shortest figure eight curve. Basmajian initiated a comparison between these two quantities. For a compact hyperbolic surface $X$ with (possibly empty) geodesic boundary, he proved in \cite{basmajian2013universal} that
\[
    s_k(X) \le 3(\sqrt{k} + 1) L_{\textswab{8}}(X).
\]
and later improved it in \cite{basmajian21short} to
\[
    s_k(X) \le 2\sqrt{k+\tfrac{1}{4}} L_{\textswab{8}}(X).
\]

In this paper, we further refine the connection between the lengths of non-simple geodesics and the shortest figure eight curves. Our approach is constructive: we work within the pair of pants determined by a shortest figure eight curve, and explicitly build families of words in $\pi_1(X)$ whose geodesic representatives realize prescribed self-intersection numbers.

Our first main result gives an improved upper bound for $s_k(X)$ in terms of $L_\textswab{8}$.
\begin{theorem}[=Theorem\ref{s_k L_8}]
    On a compact hyperbolic surface $X$ with (possibly empty) geodesic boundary, for $k>0$, we have
    \[
    s_k(X)\le \left(\left(k+\frac{1}{4}\right)^\frac{1}{2}+\frac{3}{\sqrt{2}}\left(k+\frac{1}{4}\right)^\frac{1}{4}-\frac{1}{2}\right) L_\textswab{8}(X).
    \]
    In particular,
    \[
    s_k(X) < \left( \sqrt{k}+\frac{3}{\sqrt{2}}\sqrt[4]{k} \right) L_\textswab{8}(X).
    \]
\end{theorem}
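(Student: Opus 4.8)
The plan is constructive, following the strategy announced in the introduction. Let $\gamma$ be a shortest figure eight curve on $X$, so that $\ell(\gamma)=L_\textswab{8}(X)$, and let $P$ be the immersed pair of pants that $\gamma$ fills; write $\pi_1(P)=\langle a,b\rangle$ with the two loops of $\gamma$ representing $a$ and $b$, so that $\gamma$ is the geodesic in the class $ab^{-1}$ while $a$, $b$, $ab$ are the three simple boundary classes. The first task is to record the geometry of a shortest figure eight: using that $\gamma$ is length-minimizing among figure eights, I would show the extremal configuration is balanced and extract an explicit relation between the translation lengths $\ell(a),\ell(b)$ and $L_\textswab{8}(X)$ (in particular a bound of the form $\ell(a),\ell(b)\le c\,L_\textswab{8}(X)$ with a sharp constant $c$ coming from the self-intersection angle and the collar around $\gamma$). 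This is the input that converts every subsequent length estimate into an estimate in $L_\textswab{8}(X)$, and I expect the trigonometry here to be where the factor $\tfrac{1}{\sqrt2}$ ultimately originates.

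Next I would introduce the explicit family $w_{p,q}=a^{p}b^{-q}$ together with small modifications of it, and compute its self-intersection number $i(w_{p,q})$. Note that the naive choice $\gamma^{n}$ is illegitimate, since a proper power is non-primitive and so is not a genuine $k$-geodesic in general position; the words $w_{p,q}$ are primitive and their geodesic representatives are in general position, which is exactly what the definition of $s_k(X)$ demands. The cleanest route to the count is to lift to $\mathbb{H}^2$ and tally the transverse crossings between translates of the axis of $w_{p,q}$ under the relevant cosets, or equivalently to run a Birman--Series / Cohen--Lustig type combinatorial count inside $P$; I expect the answer to grow quadratically in the word length with leading behaviour $\sim pq$, the lower-order terms being precisely what must be tracked for sharpness. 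Because the theorem requires \emph{exactly} $k$ self-intersections, and a single quadratic family only realizes a sublattice of values, the crucial refinement is to interpolate: starting from $w_{p,q}$ I would append a controlled number of extra simple sub-loops (or increment one exponent) so that the self-intersection number rises in small, predictable steps and hits every integer $k$. This interpolation is the source of the sub-leading $k^{1/4}$ correction.

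For the length I would bound $\ell(w_{p,q})$ from above by the length of the piecewise-geodesic representative built by concatenating $p$ copies of the $a$-geodesic and $q$ copies of the $b$-geodesic, namely $\ell(w_{p,q})\le p\,\ell(a)+q\,\ell(b)$, sharpened at the junction angles when the precise constant is needed, and similarly for the interpolating curves. Substituting the figure-eight geometry from the first step rewrites this bound entirely in terms of $L_\textswab{8}(X)$. Finally I would optimize: holding $i=k$ fixed and minimizing the length bound over admissible $(p,q)$ forces the balanced choice $p\approx q\approx k^{1/2}$; inverting the essentially quadratic self-intersection relation produces the $(k+\tfrac14)^{1/2}-\tfrac12$ leading behaviour, while the integrality and remainder bookkeeping from the interpolation step contributes the $\tfrac{3}{\sqrt2}(k+\tfrac14)^{1/4}$ term, yielding the stated inequality; the cleaner ``in particular'' form then follows by elementary comparison of the two expressions. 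The main obstacles are, first, the \emph{exact} self-intersection count for the family and its interpolations---the combinatorial topology inside $P$---and, second, carrying the constants through the figure-eight geometry, the length bound, and the optimization sharply enough to recover precisely $\tfrac{3}{\sqrt2}$ and the shift $k+\tfrac14$, rather than merely the correct orders of magnitude.
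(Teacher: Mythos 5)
There is a genuine gap, and it sits at the heart of your construction. In the pair of pants $P$ determined by a shortest figure eight, the generators $a$ and $b$ are freely homotopic to two \emph{disjoint} simple closed geodesics (two of the cuffs), so a one-syllable word such as your $w_{p,q}=a^pb^{-q}$ has self-intersection number growing \emph{linearly} in $p+q$, not like $pq$. This is immediate from the combinatorial formula the paper relies on (Theorem~\ref{self-intersection formula}): for a word with a single syllable block ($n=1$) one gets $i(\gamma)=H(w)+(p+q)-2$ with $H(w)\le 1$, e.g.\ $i(a^pb^q)=p+q-1$. Your expected quadratic count $\sim pq$ would hold if $a$ and $b$ were intersecting loops (as on a one-holed torus), but not here. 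The consequence is fatal to the optimization step: with $p\approx q\approx\sqrt{k}$ your curves have length $\approx\sqrt{k}\,L_\textswab{8}$ but only $\approx 2\sqrt{k}$ self-intersections, and to reach $k$ self-intersections you would be forced to take $p+q\approx k$, giving only a bound $s_k\lesssim k\,L_\textswab{8}$ --- no improvement over trivial estimates. Appending a few subloops to interpolate cannot repair this, because the deficiency is in the leading order of the count, not in the remainder. Quadratic growth of self-intersection against word length requires many \emph{alternating} syllables, so that each strand of the $(ab)$-part crosses all the others.

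This is exactly what the paper's family $w(m,n,j)=(ab)^m(a^2b)^nb^j$ achieves: it has $m+n$ syllables and, by the Diop--Paris formula, the exact count $i(w(m,n,j))=(m+n)^2+n^2+m+n+j-1$ (Lemma~\ref{self intersection number of w}). Every integer $k>1$ is then hit exactly by choosing $M$ with $k\in[M(M+1),(M+1)(M+2))$ and setting $a=k-M(M+1)$, $n=\lfloor\sqrt{a}\rfloor$, $m=M-n$, $j=a-n^2+1$; the case $n<j$ is handled by the mirror word $w'(m,n,j)$ so as to exploit $l(a)\le l(b)$, a case split your sketch does not anticipate. Two further points of divergence: first, no sharp trigonometry of the self-intersection angle or collar is needed anywhere --- the only geometric inputs are $l(ab)=L_\textswab{8}$ (the geodesic of $ab$ \emph{is} $\gamma_\textswab{8}$) and $l(a)\le\frac12 L_\textswab{8}$ (from $l(a)\le l(b)$ and $l(a)+l(b)\le L_\textswab{8}$), so your first step is over-engineered. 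Second, the constant $\frac{3}{\sqrt2}$ does not originate in hyperbolic trigonometry but in remainder bookkeeping: from $n\le\sqrt{a}$, $j<2\sqrt{a}$ and $a\le 2M+1$ one gets $\frac{n+j}{2}\le\frac{3}{2}\sqrt{2M+1}=\frac{3}{\sqrt2}\sqrt{M+\frac12}\le\frac{3}{\sqrt2}\left(k+\frac14\right)^{1/4}$, while $M\le\left(k+\frac14\right)^{1/2}-\frac12$ gives the leading term. So while your meta-strategy (explicit words in the pants, exact intersection count, length bounded by word length, optimize and interpolate) matches the paper's, the specific family you propose cannot produce the theorem.
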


As an application, we revisit the quantity $I_k(X)$, the maximal self-intersection number of a shortest closed geodesic on $X$ with at least $k$ self-intersections. Erlandsson and Parlier \cite{erlandsson2020short} prove
\[
I_k(X)\le \left( 32 \sqrt{k+\frac{1}{4}} +1 \right) \left( 16 \sqrt{k+\frac{1}{4}} +1 \right).
\]
They pose the conjecture that for any hyperbolic surface $X$, there is
\[
\lim_{k\to\infty}\frac{I_k(X)}{k}=1,
\]
and prove the conjecture when $X$ has at least one cusp. For closed hyperbolic surfaces, their bound above implies that the limit is less than 512.

Using our improved bound for $s_k(X)$, we obtain a strengthened upper bound for $I_k(X)$ and reduce the limit upper bound from 512 to 128.
\begin{theorem}[=Theorem~\ref{I_k}]
    On a complete hyperbolic surface $X$, we have
    \begin{align*}
        &I_k(X)\le\\
        &\left( 8\left(k+\frac{1}{4}\right)^\frac{1}{2}+12\sqrt{2}\left(k+\frac{1}{4}\right)^\frac{1}{4}-3 \right) \left( 16\left(k+\frac{1}{4}\right)^\frac{1}{2}+24\sqrt{2}\left(k+\frac{1}{4}\right)^\frac{1}{4}-7 \right).
    \end{align*}
    Specifically,
    \[
    I_k(X)< 32\left( \left( 4k+1 \right)^\frac{1}{2} +3 \left( 4k+1 \right)^\frac{1}{4} \right)^2.
    \]
    Thus,
    \[
    \lim_{k\to\infty}\frac{I_k(X)}{k}\le 128.
    \]
\end{theorem}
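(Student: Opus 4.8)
The plan is to combine the improved length estimate of Theorem~\ref{s_k L_8} with a purely length-based upper bound on the self-intersection number of a closed geodesic, so that the new input is only the sharper $s_k$ bound. First I would fix a closed geodesic $\gamma$ realizing $I_k(X)$, that is, a shortest closed geodesic with at least $k$ self-intersections, and write $\ell=\ell(\gamma)$. Since any $k$-geodesic has \emph{exactly} $k$, hence at least $k$, self-intersections, the shortest geodesic with at least $k$ self-intersections is no longer than the shortest $k$-geodesic, so $\ell\le s_k(X)$, while $i(\gamma)=I_k(X)$.

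Next I would invoke the counting estimate that controls the self-intersection number in terms of the ratio $\ell/L_\textswab{8}(X)$: the geometry of the shortest figure eight curve bounds the injectivity radius along $\gamma$, and chopping $\gamma$ into arcs of length comparable to $L_\textswab{8}$ and counting crossings in the two transverse directions of a standard collar yields an inequality of the form
\[
I_k(X)=i(\gamma)\le\left(8\,\frac{\ell}{L_\textswab{8}(X)}+1\right)\left(16\,\frac{\ell}{L_\textswab{8}(X)}+1\right).
\]
This is exactly the product structure underlying Erlandsson--Parlier's bound, and it is where essentially all of the geometry resides; I would either cite their estimate directly or re-derive it via the collar lemma. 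Because the right-hand side is increasing in $\ell$, the inequality $\ell\le s_k(X)$ lets me replace $\ell$ by $s_k(X)$.

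The remainder is algebra. Substituting $s_k(X)/L_\textswab{8}(X)\le(k+\tfrac14)^{1/2}+\tfrac{3}{\sqrt2}(k+\tfrac14)^{1/4}-\tfrac12$ from Theorem~\ref{s_k L_8} into the two factors, and using $24/\sqrt2=12\sqrt2$ and $48/\sqrt2=24\sqrt2$, gives
\[
8\,\frac{s_k}{L_\textswab{8}}+1=8\left(k+\tfrac14\right)^{1/2}+12\sqrt2\left(k+\tfrac14\right)^{1/4}-3,
\]
\[
16\,\frac{s_k}{L_\textswab{8}}+1=16\left(k+\tfrac14\right)^{1/2}+24\sqrt2\left(k+\tfrac14\right)^{1/4}-7,
\]
which is the first displayed inequality. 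For the cleaner form I would set $w=(4k+1)^{1/2}+3(4k+1)^{1/4}$; rewriting $(k+\tfrac14)^{1/2}=\tfrac12(4k+1)^{1/2}$ and $(k+\tfrac14)^{1/4}=\tfrac1{\sqrt2}(4k+1)^{1/4}$ turns the factors into $4w-3$ and $8w-7$, so $I_k(X)\le(4w-3)(8w-7)=32w^2-52w+21<32w^2$, the stated ``specifically'' bound.

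Finally, for the limit I would note that as $k\to\infty$ one has $(4k+1)^{1/2}\sim 2\sqrt k$ and $(4k+1)^{1/4}=o(\sqrt k)$, hence $w\sim 2\sqrt k$ and $32w^2\sim 128\,k$, giving $\lim_{k\to\infty}I_k(X)/k\le 128$. The only genuine obstacle is the self-intersection counting inequality of the second step; everything else is bookkeeping. I would also need to confirm that both this counting bound and the length estimate of Theorem~\ref{s_k L_8} persist for a general \emph{complete} surface rather than a compact one with boundary, either reducing to the compact case or treating the cusped thin parts directly, where the Erlandsson--Parlier cusp result already gives the sharper limit $1$.
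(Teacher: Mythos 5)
Your proposal is correct and takes essentially the same approach as the paper: the paper likewise imports the two inequalities from inside the Erlandsson--Parlier proof, namely $m<8\,s_k/L_\textswab{8}+1$ and $I_k(X)\le 2m^2-m=m(2m-1)$, which is exactly your product bound $\left(8\,s_k/L_\textswab{8}+1\right)\left(16\,s_k/L_\textswab{8}+1\right)$, and then substitutes Theorem~\ref{s_k L_8}; your algebra, including the substitution $w=(4k+1)^{1/2}+3(4k+1)^{1/4}$ yielding $(4w-3)(8w-7)<32w^2$ and the asymptotic $w\sim 2\sqrt{k}$, reproduces the stated bounds exactly. Your closing caveat about extending the compact-case estimate of Theorem~\ref{s_k L_8} to complete surfaces is reasonable, though the paper itself leaves that passage implicit.
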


In Section 2, we describe the local geometry of a figure eight curve and introduce a family of words $w(m,n,j)$ for each prescribed self-intersection number $k$. Section 3 establishes the precise self-intersection number of these words. Section 4 estimates the lengths of the resulting geodesics and proves the main theorems concerning $s_k(X)$ and $I_k(X)$.

\section{Construction of model words}

In this section, we study the local geometry of a shortest figure eight curve on a hyperbolic surface and use it to construct $k$-geodesics.

\begin{definition}
A \emph{simple self-intersection} of a curve $\gamma$ is a point at which $\gamma$ passes through itself exactly twice, i.e.\ the intersection is formed by two distinct local subarcs. The \emph{self-intersection number} $i(\gamma)$ of $\gamma$ on a hyperbolic surface is the minimal number of self-intersections among all curves in the free homotopy class of $\gamma$ with only simple self-intersections.
\end{definition}

\begin{definition}
A \emph{$k$-geodesic} on a hyperbolic surface $X$ is a closed geodesic with self-intersection number $k$. Let $s_k(X)$ denote the length of a shortest $k$-geodesic on $X$, and let $L_\textswab{8}(X)$ denote the length of a shortest figure eight curve.
\end{definition}


On the hyperbolic surface $X$, we choose a shortest figure eight curve $\gamma_\textswab{8}$, which determines an embedded pair of pants $P\subset X$. We will construct $k$-geodesics inside $P$. Let $a,b$ denote the two simple subloops of $\gamma_\textswab{8}$ generating $\pi_1(P)$, as illustrated in Figure~\ref{fig:figure eight}, chosen so that the geodesic representative of the word $ab$ is exactly $\gamma_\textswab{8}$. Without loss of generality we assume
\[
    l(a)\le l(b).
\]

\begin{figure}[ht]
    \centering
    \includegraphics[width=12cm]{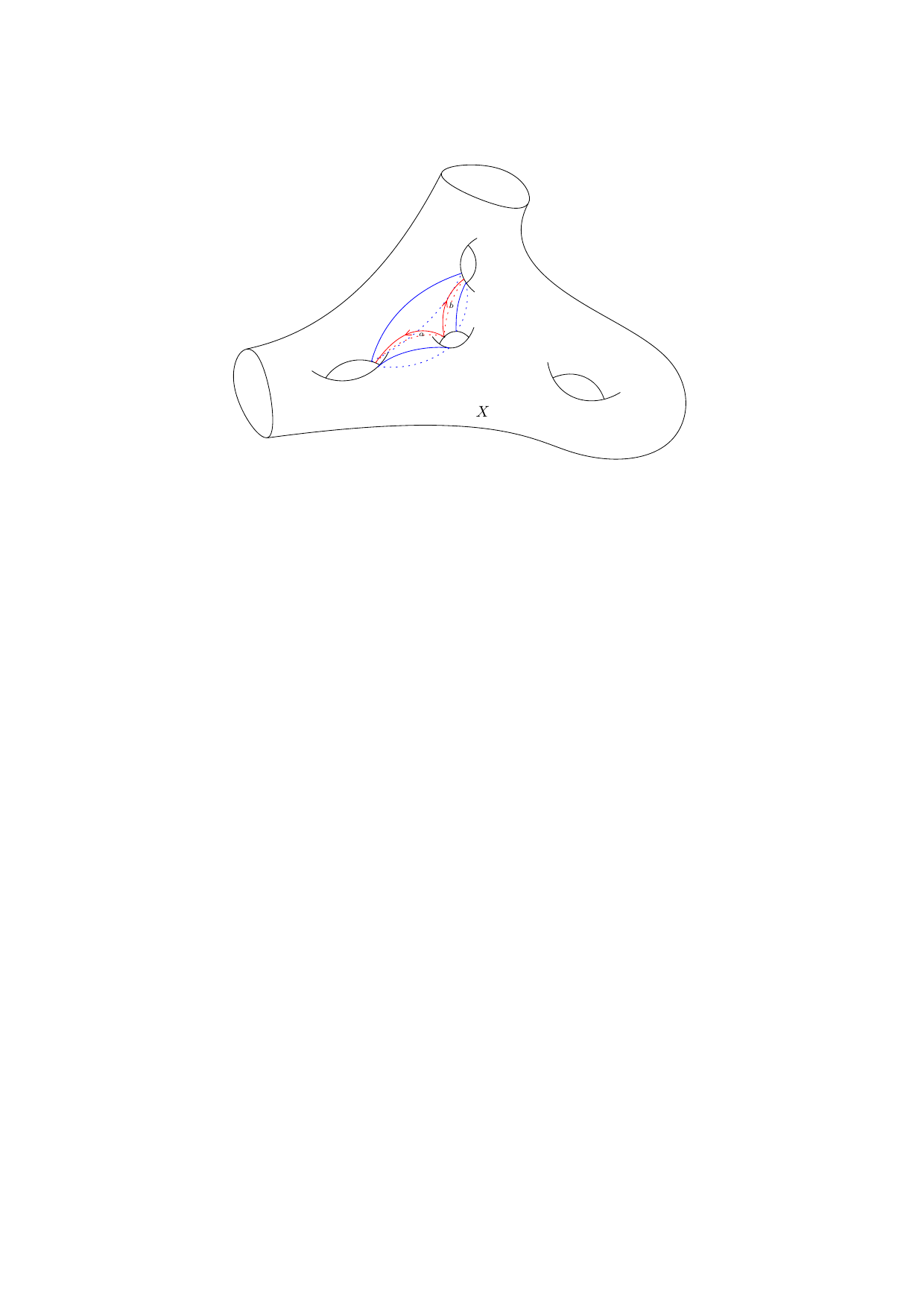}
    \caption{A shortest figure eight curve $\gamma_\textswab{8}$ (in red) determines an embedded pair of pants $P$ (in blue); The loops $a$ and $b$ are the subloops of $\gamma_\textswab{8}$ generating $\pi_1(P)$ and the word $ab$ represents $\gamma_\textswab{8}$.}
    \label{fig:figure eight}
\end{figure}

\begin{definition}
    A \emph{word} in $A=\{a,b,a^{-1},b^{-1}\}$ is a sequence $v_1v_2\dots$ where each $v_i\in A$. A finite word $v_1\dots v_n$ is \emph{cyclically reduced} if $v_i\neq v_{i+1}^{-1}$ for all $1\le i\le n$, where $v_{n+1}=v_1$. The \emph{length} $|w|$ of a cyclically reduced word $w$ is the number of its letters.
\end{definition}

\begin{remark}
    Any cyclically reduced word in $\{a,b,a^{-1},b^{-1}\}$ has the form
    \[
    s_1^{i_1}r_1^{j_1}\dots s_n^{i_n}r_n^{j_n} \text{ or } r_1^{j_1}s_1^{i_1}\dots r_n^{j_n}s_n^{i_n},
    \]
    where $s_i\in\{a,a^{-1}\}$, $r_i\in\{b,b^{-1}\}$, and $i_k,j_k>0$.
\end{remark}

\begin{definition}
    For integers $m,n\ge 0$ and $j>0$, define
    \[
    w(m,n,j)=(ab)^m(a^2b)^nb^j \text{ and } w'(m,n,j)=(ba)^m(b^2a)^na^j,
    \]
    where $m,n\ge 0$ and $j>0$. See Figure~\ref{fig:k geodesic}.
\end{definition}

\begin{figure}[ht]
    \centering
    \includegraphics[width=12cm]{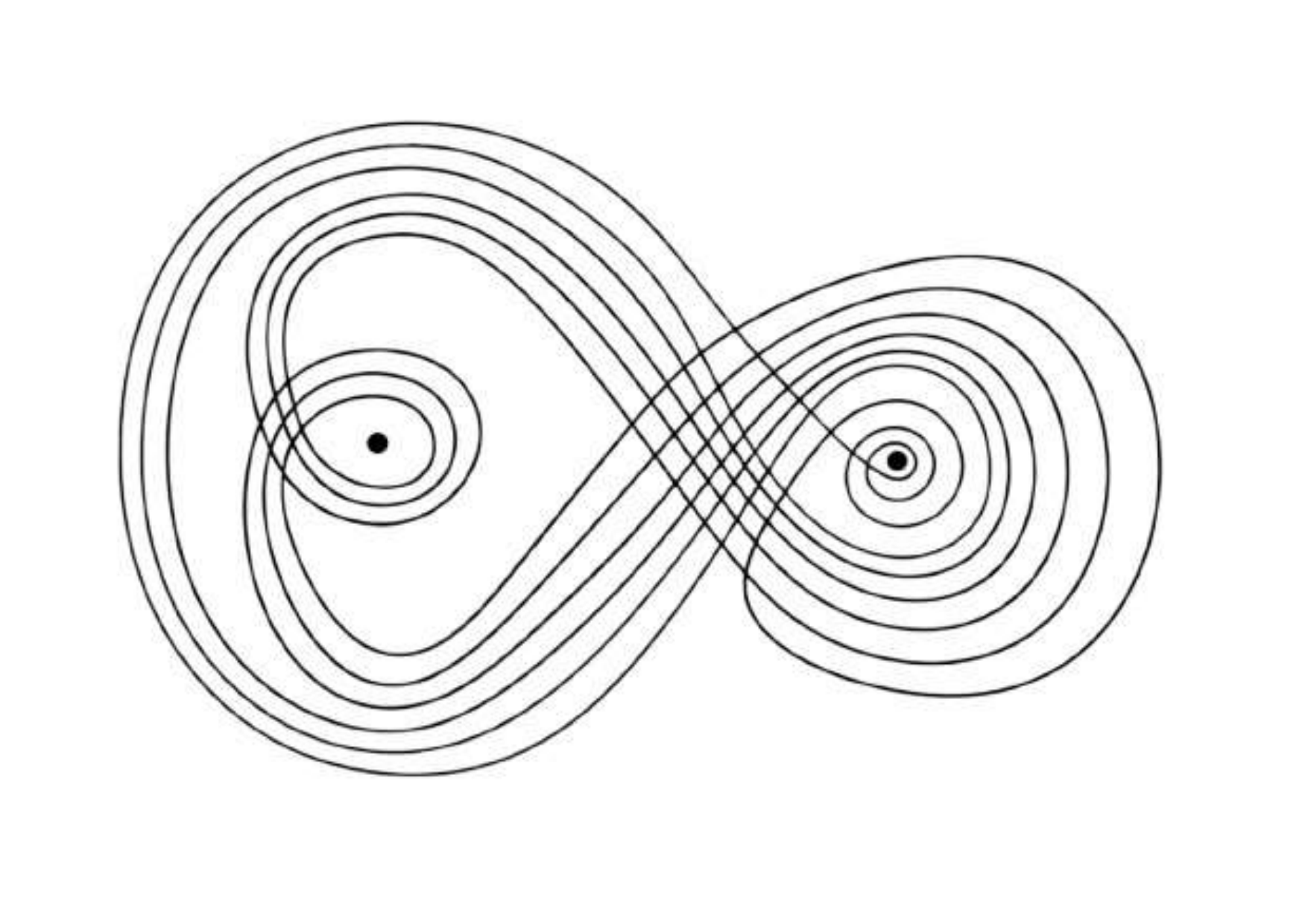}
    \caption{A curve represented by the word $w(3,3,3)$, drawn so that it realizes its self-intersection number, as described in Lemma~\ref{self intersection number of w}}
    \label{fig:k geodesic}
\end{figure}

\section{Calculating the self-intersection number}

The problem of computing the self-intersection number of a word representing a closed curve on a surface has been studied in several papers, including \cite{cohen1987paths, tan1996self, chas2012self}. We recall the setup used in \cite{diop2022self}, which provides a combinatorial formula.

\begin{definition}
    Let
    \[
    w=s_1^{i_1}r_1^{j_1}\dots s_n^{i_n}r_n^{j_n},
    \]
    be a cyclically reduced word with $s_k\in\{a,a^{-1}\}$ and $r_k\in\{b,b^{-1}\}$. Let
    \[
    x_k=s_k^{i_k}r_k^{j_k}\dots s_n^{i_n}r_n^{j_n}\dots s_{k-1}^{i_{k-1}}r_{k-1}^{j_{k-1}},
    \]
    and
    \[
    y_k=r_k^{j_k}s_{k+1}^{i_{k+1}}\dots r_n^{j_n}s_1^{i_1}\dots r_{k-1}^{j_{k-1}}s_k^{i_k}.
    \]
    Fix a fundamental domain for the action of $\pi_1(P)$ on the Poincaré disk $\mathbb D$. Let $\alpha_k$ and $\beta_k$ denote the complete geodesics in $\mathbb D$ joining the limit points of the infinite words $\cdots x_k x_k x_k \cdots$ and $\cdots y_k y_k y_k \cdots$, respectively.
\end{definition}

\begin{definition}
    For $1\le k\le n$, define
    \[
    C_k^1(w)=\{(\alpha_k,\alpha_l): \alpha_k\cap\alpha_l\neq\emptyset, s_k^{i_k}\neq s_l^{i_l}, k<l\le n\},
    \]
    \[
    C_k^2(w)=\{(\beta_k,\alpha_l): \beta_k\cap\alpha_l\neq\emptyset, s_k^{i_k}\neq (s_l^{-1})^{i_l}, k<l\le n\},
    \]
    \[
    D_k^1(w)=\{(\beta_k,\beta_l): \beta_k\cap\beta_l\neq\emptyset, r_k^{j_k}\neq r_l^{j_l}, k<l\le n\}.
    \]
    Further define
    \[
    D_1^2(w)=\{(\alpha_1,\beta_l): \alpha_1\cap\beta_l\neq\emptyset, 1\le l\le n\},
    \]
    and for $2\le k\le n$,
    \[
    D_k^2(w)=\{(\alpha_k,\beta_l): \alpha_k\cap\beta_l\neq\emptyset, r_{k-1}^{j_{k-1}}\neq (r_l^{-1})^{j_l}, k\le l\le n\}.
    \]
    The associated combinatorial quantity is
    \[
    H(w)=\sum_{i=1}^2\sum_{k=1}^n \left(\#C_k^i(w)+\#D_k^i(w)\right).
    \]
\end{definition}

The following theorem gives an explicit relation between the self-intersection number and the function $H(w)$.

\begin{theorem}[\cite{diop2022self}]
\label{self-intersection formula}
    Let $\gamma$ be a closed geodesic on a pair of pants, associated to the cyclically reduced word $w=s_1^{i_1}r_1^{j_1}\dots s_n^{i_n}r_n^{j_n}$, then we have
    \[
    i(\gamma) = H(w) + n|w| - 2n^2 - \sum_{1\le k<l\le n}(|i_k-i_l|+|j_k-j_l|).
    \]
\end{theorem}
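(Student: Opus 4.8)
The plan is to compute $i(\gamma)$ geometrically, by lifting $\gamma$ to the Poincar\'e disk $\mathbb{D}$ and counting the crossings of geodesic axes modulo the action of $\pi_1(P)$, and then to translate each crossing into a purely combinatorial condition on the word $w$. The closed geodesic $\gamma$ is the projection of the axis $A_w$ of the hyperbolic isometry represented by $w$, and its full preimage in $\mathbb{D}$ is the orbit $\{g A_w : g\in\pi_1(P)\}$. Transverse self-intersection points of $\gamma$ correspond bijectively to $\pi_1(P)$-orbits of unordered pairs of distinct axes $\{gA_w, hA_w\}$ that cross, so the entire count is a count of crossing pairs up to conjugation.

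First I would invoke the standard dictionary between crossing and boundary linking: two complete geodesics in $\mathbb{D}$ cross if and only if their endpoints on $\partial\mathbb{D}\cong S^1$ are linked. To extract canonical orbit representatives I fix a fundamental domain and rotate the cyclic word $w$ to begin at each of its $n$ $a$-syllables, respectively its $n$ $b$-syllables; the resulting periodic infinite words $\cdots x_k x_k\cdots$ and $\cdots y_k y_k\cdots$ have attracting and repelling fixed points that are exactly the endpoints of the axes $\alpha_k$ and $\beta_k$. This reduces the infinite orbit count to crossings among the finite collection $\{\alpha_k,\beta_k\}_{k=1}^n$, which is why the sums in $H(w)$ range over $1\le k<l\le n$.

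The central step is to convert ``the axes $\alpha_k,\alpha_l$ (or $\beta_k,\alpha_l$, etc.) cross'' into a condition on letters. The key fact is that, for a fixed fundamental domain, the circular order on $\partial\mathbb{D}$ agrees with the order on infinite reduced words induced by the planar embedding of the Cayley graph of $\langle a,b\rangle$; under this identification the linking of two axes is decided by comparing the associated finite words up to their first point of difference. Performing this comparison for each of the three types of pair should reproduce precisely the generator/sign conditions defining $C_k^1,C_k^2,D_k^1,D_k^2$ --- for instance $s_k^{i_k}\neq s_l^{i_l}$ is exactly the condition that the two leading $a$-blocks point to opposite sides and hence force a crossing. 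Thus $H(w)$ records all the ``essential'' crossings occurring between distinct syllable positions.

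Finally I would account for the crossings that $H$ does not see, namely those between powers of the \emph{same} generator in aligned blocks $s_k^{i_k},s_l^{i_l}$ and $r_k^{j_k},r_l^{j_l}$; these contribute terms of the form $\sum_{k<l}\min(i_k,i_l)$ and $\sum_{k<l}\min(j_k,j_l)$, which, via $\min(p,q)=\tfrac12(p+q-|p-q|)$ together with $\sum_{k<l}(i_k+i_l)=(n-1)\sum_k i_k$ and $|w|=\sum_k(i_k+j_k)$, reorganize into the closed-form correction $n|w|-2n^2-\sum_{k<l}(|i_k-i_l|+|j_k-j_l|)$. The hard part will be exactly this bookkeeping: one must show that each geometric double point is counted once and only once after passing to $\pi_1(P)$-orbits, discarding inessential crossings that can be removed by homotopy and ruling out double-counting of a single orbit, and then verify that the same-generator contributions and the diagonal terms assemble into the stated integer. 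Establishing the circular-order/word-order correspondence rigorously and pinning down these multiplicities is where the genuine difficulty lies; the translation of the linking conditions into $C_k^i,D_k^i$ is then essentially bookkeeping.
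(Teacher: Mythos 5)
There is nothing in the paper to compare against: Theorem~\ref{self-intersection formula} is imported from Diop--Paris \cite{diop2022self} and stated without proof, so your outline has to stand on its own. Its architecture --- lift $\gamma$ to $\mathbb D$, identify self-intersections with $\pi_1(P)$-orbits of linked pairs of axes, reduce to the $2n$ axes $\alpha_k,\beta_k$ obtained from cyclic rotations at each syllable --- is indeed the standard Cohen--Lustig/Chas-style strategy underlying the cited result, and the reduction to the finite collection is correctly motivated. But two of the concrete steps you rely on fail. First, your claim that the letter conditions in $C_k^i,D_k^i$ ``reproduce precisely'' the crossing conditions is wrong: those sets are defined by the \emph{conjunction} of a geometric crossing condition ($\alpha_k\cap\alpha_l\neq\emptyset$, which must still be decided by the circular-order comparison) and a letter inequality acting as a filter. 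The inequality $s_k^{i_k}\neq s_l^{i_l}$ does not force a crossing --- it is satisfied, for instance, by $a$ versus $a^2$, two blocks on the \emph{same} side; and in the paper's own computation for $w(m,n,j)$ one has $\#C_k^1=0$ for all $k$ even though pairs with $i_k=1\neq 2=i_l$ abound, precisely because those axes are disjoint. So $H(w)$ is a filtered geometric count, not the purely lexical count your step 3 describes.

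Second, and more decisively, the bookkeeping in your final step is arithmetically inconsistent with the stated formula. Using $\min(p,q)=\tfrac12(p+q-|p-q|)$ and $\sum_{k<l}(i_k+i_l)=(n-1)\sum_k i_k$, your proposed same-generator contribution gives
\[
\sum_{k<l}\bigl(\min(i_k,i_l)+\min(j_k,j_l)\bigr)=\frac{n-1}{2}\,|w|-\frac12\sum_{k<l}\bigl(|i_k-i_l|+|j_k-j_l|\bigr),
\]
which is not $n|w|-2n^2-\sum_{k<l}(|i_k-i_l|+|j_k-j_l|)$. Concretely, for $w=aba^2b$ (so $n=2$, $i=(1,2)$, $j=(1,1)$, $|w|=5$) your terms total $2$ while the theorem's correction equals $1$. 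The quantity that actually assembles to the correction is
\[
\sum_{k<l}\bigl(2\min(i_k,i_l)-2\bigr)+\sum_{k<l}\bigl(2\min(j_k,j_l)-2\bigr)+\sum_{k=1}^n(i_k-1)+\sum_{k=1}^n(j_k-1),
\]
i.e.\ each off-diagonal pair of blocks contributes $2\min(\cdot)-2$ crossings (not $\min$), and there are diagonal, single-block contributions $i_k-1$ and $j_k-1$ that your sketch omits entirely. Since you also explicitly defer the once-and-only-once orbit counting (``the hard part will be exactly this bookkeeping''), what you have is a plausible roadmap whose one piece of explicit accounting is off by a factor of $2$ and missing the diagonal terms --- a genuine gap, not a proof.
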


\begin{lemma}
\label{self intersection number of w}
    The self-intersection number of
    \[
    w(m,n,j)=(ab)^m(a^2b)^nb^j
    \]
    is
    \[
    i(w(m,n,j)) = (m+n)^2+n^2+m+n+j-1.
    \]
\end{lemma}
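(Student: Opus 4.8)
The plan is to apply the combinatorial formula of Theorem~\ref{self-intersection formula} directly to $w(m,n,j)$. First I would put the word into canonical syllable form. Reading $(ab)^m(a^2b)^nb^j$ from the left, the $a$-syllables are $a^1$ for the first $m$ pairs and $a^2$ for the remaining $n$ pairs, while the terminal $b$ of the last factor $a^2b$ absorbs the tail $b^j$, so the final $b$-syllable is $b^{j+1}$. Thus there are $N:=m+n$ syllable pairs (this $N$ plays the role of the index ``$n$'' in Theorem~\ref{self-intersection formula}, which clashes with our parameter $n$), with $s_k=a$ and $r_k=b$ throughout, $i_k=1$ for $k\le m$ and $i_k=2$ for $k>m$, and $j_k=1$ for $k<N$ together with $j_N=j+1$; the total length is $|w|=2m+3n+j$.

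With these values the three elementary terms of the formula are immediate: $N|w|=(m+n)(2m+3n+j)$ and $2N^2=2(m+n)^2$, while, since the $i_k$ take only the values $1,2$ and only $j_N$ differs from the common value $1$, one has $\sum_{k<l}|i_k-i_l|=mn$ and $\sum_{k<l}|j_k-j_l|=j(m+n-1)$. A short algebraic simplification gives $N|w|-2N^2-\sum_{k<l}(|i_k-i_l|+|j_k-j_l|)=n^2+j$, so that
\[
i(w(m,n,j))=H(w)+n^2+j.
\]
Hence the entire content of the lemma reduces to the claim $H(w)=(m+n)^2+(m+n)-1$, to which I would devote the rest of the proof.

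Before counting, I would simplify the sign conditions in the sets $C_k^i,D_k^i$ using positivity of $w$. Since every $s_k^{i_k}$ is a positive power of $a$ and every $r_k^{j_k}$ a positive power of $b$, the conditions $s_k^{i_k}\neq(s_l^{-1})^{i_l}$ and $r_{k-1}^{j_{k-1}}\neq(r_l^{-1})^{j_l}$ hold automatically, so $C^2$ and $D^2$ carry no letter constraint; meanwhile $s_k^{i_k}\neq s_l^{i_l}$ reduces to $i_k\neq i_l$ (one index in $\{1,\dots,m\}$ and one in $\{m+1,\dots,N\}$), and $r_k^{j_k}\neq r_l^{j_l}$ reduces to $j_k\neq j_l$ (a pair involving the terminal index $N$). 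Thus $H(w)$ becomes a pure count of crossings among the complete geodesics $\alpha_k,\beta_l$ subject only to these index restrictions, and the $D^2$-contributions for $k=1$ and $k\ge2$ merge into $\#\{(k,l):k\le l,\ \alpha_k\cap\beta_l\neq\emptyset\}$.

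The crux, and the step I expect to be the main obstacle, is determining exactly which of these geodesics cross. I would fix the cyclic order $a,b,a^{-1},b^{-1}$ of the four gates on $\partial\mathbb D$ coming from the planar structure of $P$, and locate endpoints: each $\alpha_k$ runs from the $a$-gate (forward endpoint $x_k^\infty$) to the $b^{-1}$-gate, while each $\beta_k$ runs from the $b$-gate to the $a^{-1}$-gate. Two such geodesics cross iff their endpoint pairs link in this cyclic order, which I would translate into a lexicographic comparison of the relevant periodic infinite words inside a common gate. The remaining work is to perform this comparison for $w(m,n,j)$, exploiting that all the infinite words are shifts of a single periodic word, so that each comparison is governed by where two shifts first disagree and is therefore highly regular. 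Organizing the count by the $(ab)$-block $\{1,\dots,m\}$, the $(a^2b)$-block $\{m+1,\dots,N\}$, and the tail $b^{j+1}$, I expect the split $C^1=mn$, $D^1=m+n-1$, and $C^2+D^2=m^2+mn+n^2$, whose sum is exactly $(m+n)^2+(m+n)-1$. The delicate points, where I would spend the most care, are the lexicographic comparisons at the two block interfaces and at the tail, where the periodicity breaks, and verifying that precisely $mn$ of the cross-type $\alpha$--$\beta$ pairs fail to link (so that $C^2+D^2$ falls short of the ``all cross'' value $N^2$ by $mn$). Substituting $H(w)=(m+n)^2+(m+n)-1$ into $i(w(m,n,j))=H(w)+n^2+j$ then yields the stated value $(m+n)^2+n^2+m+n+j-1$.
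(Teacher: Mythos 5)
Your first half coincides exactly with the paper's argument and is correct: the syllable normal form with $N=m+n$ pairs, exponents $i_k=1$ for $k\le m$, $i_k=2$ for $k>m$, $j_k=1$ for $k<N$ and $j_N=j+1$, the length $|w|=2m+3n+j$, the sums $\sum_{k<l}|i_k-i_l|=mn$ and $\sum_{k<l}|j_k-j_l|=(m+n-1)j$, and the resulting reduction $i(w(m,n,j))=H(w)+n^2+j$, so that the lemma is equivalent to $H(w)=(m+n)^2+(m+n)-1$. But the proof of this last identity --- which is the entire content of the lemma --- is never carried out: you set up the linking criterion and then write ``I expect the split $\dots$,'' explicitly deferring the lexicographic comparisons. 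In the paper this is precisely the step that is done, by determining each $\#C_k^1$, $\#D_k^1$, $\#C_k^2$, $\#D_k^2$ individually; as written, your proposal has a genuine gap at the decisive point.

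Moreover, the two ingredients you anticipate are both incorrect, even though their errors cancel in the total. First, the cyclic order $a,b,a^{-1},b^{-1}$ you fix for the gates is the linked configuration, which corresponds to a once-punctured torus; for the pair of pants $P$ the loops $a$ and $b$ are freely homotopic to two distinct boundary geodesics (the third boundary class is $ab^{-1}$, not $ab$, which is exactly why $ab$ is a figure eight), and the gates occur in the unlinked order $a,a^{-1},b^{-1},b$. Under your order, every $\alpha$-chord (gate $a$ to gate $b^{-1}$) and every $\beta$-chord (gate $b$ to gate $a^{-1}$) join nested pairs of gates and never cross; this would already give the absurd conclusion $i(ab)=0$ for the figure eight itself. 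Under the correct order the two pairs of gates interleave, so \emph{all} $(m+n)^2$ pairs $\{\alpha_i,\beta_j\}$ cross automatically, with no delicate comparison needed. Second, and consequently, your expected split $\#C^1=mn$, $\#(C^2\cup D^2)=(m+n)^2-mn$ cannot hold: since $C^2$ and $D^2$ contribute exactly $(m+n)^2$ and $D^1$ contributes at most $m+n-1$, the target total forces $C^1=0$, i.e.\ the mixed $\alpha$--$\alpha$ pairs do \emph{not} cross --- the opposite of your guess, and exactly what the paper's count asserts ($\#C_k^1=0$; note the paper's Figure~\ref{fig:H function} caption and its line $\#D_{m+n}^1=1$ appear to contain typos, since $D_{m+n}^1$ is empty by definition). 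What genuinely requires the word-by-word lexicographic analysis is only $\#C_k^1=0$ and $\#D_k^1=1$ for $k<m+n$, and that verification is precisely what is missing from your proposal.
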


\begin{proof}
    We first rewrite $w(m,n,j)$ in the standard form:
    \[
    w(m,n,j)=a_1b_1\dots a_mb_m a_{m+1}^2b_{m+1}\dots a_{m+n-1}^2ba_{m+n}^2b_{m+n}^{j+1}.
    \]
    Thus
    \[
    |w(m,n,j)|=2m+3n+j,
    \]
    and
    \[
    \sum_{1\le k<l\le n}(|i_k-i_l|+|j_k-j_l|)=mn+(m+n-1)j.
    \]
    
    Since $w(m,n,j)$ contains no inverses of $a$ or $b$, the conditions defining $C_k^i$ and $D_k^i$ depend only on comparing exponents. For example,
    \[
    C_k^1(w)=\{(\alpha_k,\alpha_l): \alpha_k\cap\alpha_l\neq\emptyset, i_k\neq i_l, k<l\le n\}.
    \]
    It is easy to check (for example, see Figure~\ref{fig:H function})
    \begin{align*}
        &\#C_k^1=0, \text{ for } 1\le k\le m+n,\\
        &\#D_k^1=1, \text{ for } 1\le k\le m+n-1,\\
        &\#D_{m+n}^1=1,\\
        &\#C_k^2=m+n-k, \text{ for } 1\le k\le m+n,\\
        &\#D_1^2=m+n,\\
        &\#D_k^2=m+n-k+1, \text{ for } 2\le k\le m+n.
    \end{align*}
    
    \begin{figure}[ht]
    \centering
    \includegraphics[width=15cm]{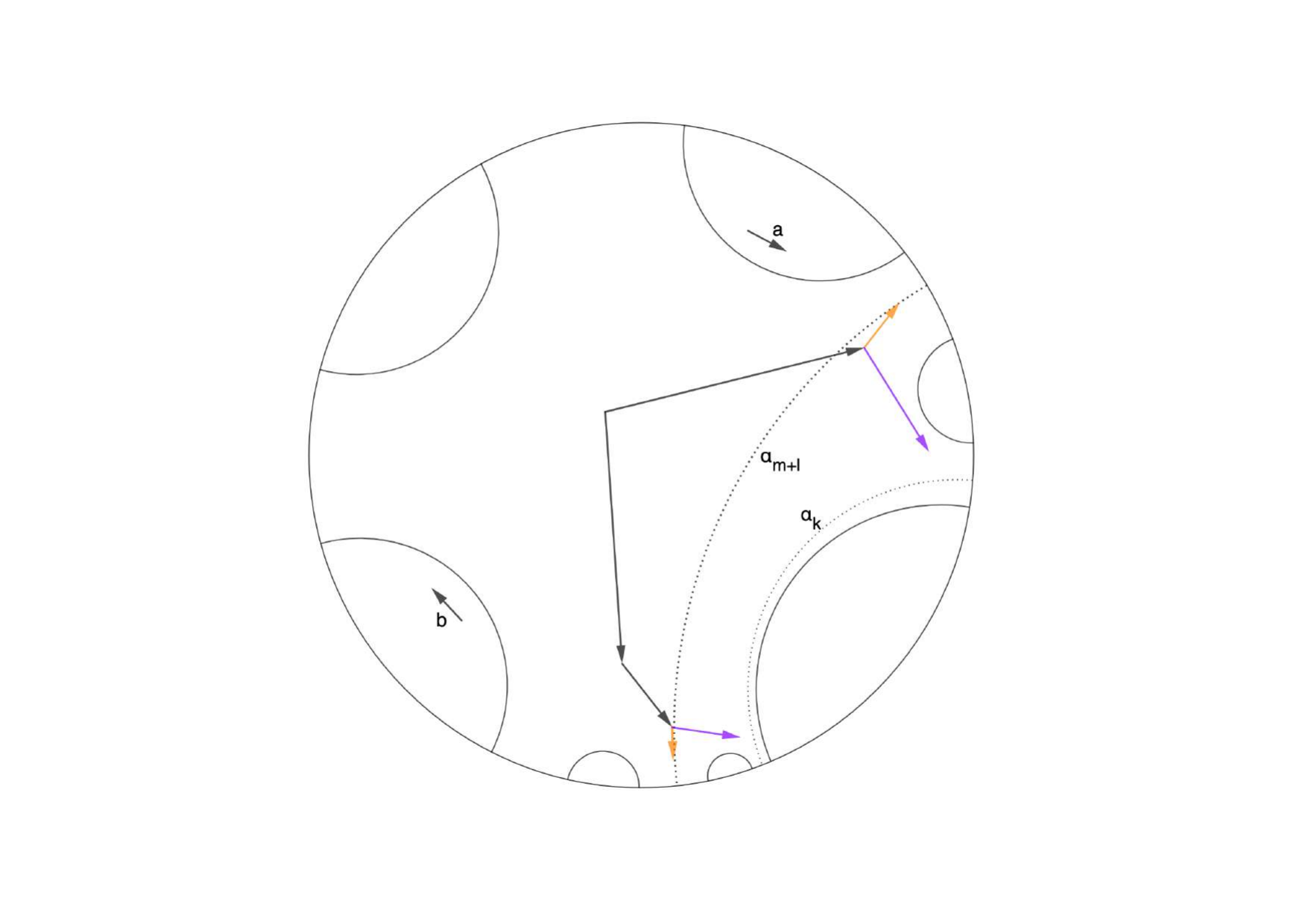}
    \caption{For $1\le k\le m$ and $1\le l\le n$, the infinite words $x_kx_kx_k\cdots$ and $x_{m+l}x_{m+l}\cdots$, as well as their inverses, initially agree along the black arrows before diverging. Hence, $\alpha_k \cap \alpha_{m+l}\neq0$.}
    \label{fig:H function}
    \end{figure}

    Summing these contributions gives
    \[
    H(w)=\sum_{i=1}^2\sum_{k=1}^{m+n}\left(\#C_k^i(w)+\#D_k^i(w)\right)=(m+n)^2+m+n-1,
    \]
    and by Theorem~\ref{self-intersection formula}, we have
    \[
    i(w(m,n,j))=(m+n)^2+n^2+m+n+j-1.
    \]
\end{proof}

\begin{remark}
    Diop--Paris \cite{diop2022self} show that for any word of the form
    \[
    w=a^{i_1}b^{j_1}\dots a^{i_n}b^{j_n},
    \]
    the quantity
    \[
    \sum_{k=1}^n \left(\#C_k^i(w)+\#D_k^i(w)\right)
    \]
    satisfies
    \[
    n-1\le \sum_{k=1}^n \left(\#C_k^i(w)+\#D_k^i(w)\right) \le (n-1)^2.
    \]
    Our family $w(m,n,j)$ realizes this lower bound.
\end{remark}

\section{Length estimates and proofs of the main theorems}

Any integer $k>0$ lies in a unique interval\
\[
[M(M+1),(M+1)(M+2))
\]
for some integer $M\ge 0$. Set
\begin{align*}
a&=k-M(M+1),
&
n&=\lfloor \sqrt{a} \rfloor,
&
m&=M-n,
&
j&=a-n^2+1.
\end{align*}

\begin{lemma}
    For any $k>1$, the parameters $(m,n,j)$ above define a word $w(m,n,j)$, and hence a closed geodesic $\gamma(m,n,j)$ on $P$. If $n\ge j$, we have
    \[
    l(\gamma(m,n,j))\le \left(\left(k+\frac{1}{4}\right)^\frac{1}{2}+\frac{3}{\sqrt{2}}\left(k+\frac{1}{4}\right)^\frac{1}{4}-\frac{1}{2}\right) L_\textswab{8}(X).
    \]
    In particular,
    \[
    l(\gamma(m,n,j))< \left( \sqrt{k}+\frac{3}{\sqrt{2}}\sqrt[4]{k} \right) L_\textswab{8}(X).
    \]
\end{lemma}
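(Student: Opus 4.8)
The plan is to bound $l(\gamma(m,n,j))$ from above by exhibiting an explicit, non-geodesic representative of its free homotopy class and invoking the fact that the geodesic representative is length-minimizing. The representative is read off directly from the figure eight. Writing $p$ for the self-intersection point of $\gamma_\textswab{8}$, the geodesic $\gamma_\textswab{8}$ decomposes as the concatenation of the two subloops $a$ and $b$ based at $p$; since these arcs are precisely the subarcs of the smooth geodesic $\gamma_\textswab{8}$ between its two passages through $p$, their lengths satisfy $l(a)+l(b)=L_\textswab{8}(X)$, and the normalization $l(a)\le l(b)$ forces $l(a)\le \tfrac12 L_\textswab{8}(X)\le l(b)$. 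Because $a,b$ generate $\pi_1(P,p)$ and $w(m,n,j)=(ab)^m(a^2b)^nb^j$ contains no inverses, concatenating these arcs in the order prescribed by $w(m,n,j)$ yields a piecewise-geodesic loop (with corners at $p$) lying in the conjugacy class of $\gamma(m,n,j)$, whose length is exactly the sum of the traversed arc lengths. Counting letters, $w(m,n,j)$ has $m+2n$ occurrences of $a$ and $m+n+j$ of $b$, so
\[
l(\gamma(m,n,j))\le (m+2n)\,l(a)+(m+n+j)\,l(b).
\]

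Next I would eliminate the unknown split of $L_\textswab{8}(X)$ between $l(a)$ and $l(b)$. Substituting $l(b)=L_\textswab{8}(X)-l(a)$ rewrites the right-hand side as $(m+n+j)L_\textswab{8}(X)+(n-j)\,l(a)$. This is exactly where the hypothesis $n\ge j$ is used: the coefficient $n-j$ is then nonnegative, so the bound is largest at the maximal admissible value $l(a)=\tfrac12 L_\textswab{8}(X)$, giving
\[
l(\gamma(m,n,j))\le \Bigl(m+\tfrac{3n}{2}+\tfrac{j}{2}\Bigr)L_\textswab{8}(X)=\Bigl(M+\tfrac{n+j}{2}\Bigr)L_\textswab{8}(X),
\]
where I used $m=M-n$. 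Invoking $j\le n$ once more collapses this to the clean intermediate estimate $l(\gamma(m,n,j))\le (M+n)\,L_\textswab{8}(X)$.

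It then remains to bound $M$ and $n$ in terms of $k$. From $M(M+1)\le k$ together with the identity $M(M+1)+\tfrac14=(M+\tfrac12)^2$ I obtain $M\le \sqrt{k+\tfrac14}-\tfrac12$. For $n$, the inequality $k-M(M+1)<2(M+1)$ gives $n\le\sqrt{\,k-M(M+1)\,}<\sqrt{2(M+1)}\le\sqrt{2\sqrt{k+\tfrac14}+1}$, and squaring shows $\sqrt{2\sqrt{k+\tfrac14}+1}\le \tfrac{3}{\sqrt2}\,(k+\tfrac14)^{1/4}$, valid for all relevant $k$ since it reduces to $\sqrt{k+\tfrac14}\ge\tfrac25$. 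Adding the two bounds produces
\[
M+n\le \Bigl(k+\tfrac14\Bigr)^{1/2}+\tfrac{3}{\sqrt2}\Bigl(k+\tfrac14\Bigr)^{1/4}-\tfrac12,
\]
which is the asserted inequality. The looser bound with $\sqrt{k}$ and $\sqrt[4]{k}$ then follows from $\sqrt{k+\tfrac14}-\tfrac12<\sqrt{k}$ and a routine check that this slack of nearly $\tfrac12$ absorbs the small increase of $(k+\tfrac14)^{1/4}$ over $k^{1/4}$.

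The step I expect to demand the most care is the geometric foundation rather than the arithmetic: namely, verifying that $\gamma_\textswab{8}$ genuinely splits as smooth geodesic arcs with $l(a)+l(b)=L_\textswab{8}(X)$, and that the arc-concatenation realizes the conjugacy class of $w(m,n,j)$ inside $P\subset X$ so that the minimizing property of the geodesic representative applies. Once these are pinned down, every subsequent estimate is elementary, and the role of the hypothesis $n\ge j$ (guaranteeing the extremal split is the balanced one) is the only other point requiring attention.
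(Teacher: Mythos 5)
Your length estimate is, in substance, the paper's own proof: the same piecewise-geodesic representative giving $l(\gamma(m,n,j))\le (m+2n)l(a)+(m+n+j)l(b)$, the same rewriting as $(m+n+j)L_\textswab{8}+(n-j)l(a)$ using $l(a)+l(b)=L_\textswab{8}$, the same use of $n\ge j$ together with $l(a)\le\frac{1}{2}L_\textswab{8}$, and the same bounds $M\le\sqrt{k+\frac14}-\frac12$ and $n\le\sqrt{2M+2}\le\frac{3}{\sqrt2}\left(k+\frac14\right)^{1/4}$. Your collapse of $M+\frac{n+j}{2}$ to $M+n$ is a harmless variant (the paper keeps $\frac{n+j}{2}$ and bounds $j<2\sqrt{a}$ separately; both routes land on the same final expression), and your arithmetic checks out, including the reduction of $\sqrt{2\sqrt{k+\frac14}+1}\le\frac{3}{\sqrt2}\left(k+\frac14\right)^{1/4}$ to $\sqrt{k+\frac14}\ge\frac25$.

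There is, however, one genuine omission: the lemma's first assertion, that the parameters actually \emph{define} a word $w(m,n,j)$, which requires $m=M-n\ge 0$ and $j>0$. The claim $j\ge 1$ is immediate from $n=\lfloor\sqrt{a}\rfloor$, so $n^2\le a$; but $m\ge 0$ needs an argument, and it is precisely where the hypothesis $k>1$ enters. From $k<(M+1)(M+2)$ one gets $a\le 2M+1$, hence $n\le\sqrt{2M+1}$, and since $M$ and $n$ are integers this forces $n\le M$ except in the case $(M,n)=(0,1)$, i.e.\ $k=1$, which the hypothesis excludes. Your proof never performs this check and never uses $k>1$ anywhere (your only lower-bound requirement was $\sqrt{k+\frac14}\ge\frac25$); note that for $k=1$ your construction would produce $m=-1$ and no word at all, even though you freely substitute $m=M-n$ in the estimates. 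You already derived the needed inequality $a<2(M+1)$ when bounding $n$, so the fix is two lines — but as written, the first half of the lemma is unproven.
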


\begin{proof}
Clearly $j>0$. Since $k<(M+1)(M+2)$, we have $a\le 2M+1$ and hence $n\le \sqrt{2M+1}$. As $M$ and $n$ are integers, this implies that $M\ge n$ except in the case $(M,n)=(0,1)$, which is ruled out by our assumption that $k>1$.

Note that
\[
\sqrt{a}-1 < \lfloor \sqrt{a} \rfloor \le \sqrt{a},
\]
which implies
\[
j=a-n^2+1<2\sqrt{a}\le2\sqrt{2M+1}.
\]

When $n\ge j$, we have
\begin{align*}
    l(\gamma(m,n,j)) & \le l(w(m,n,j))\\
    &= (m+2n)l(a)+(m+n+j)l(b)\\
    &= (m+n+j)l(ab)+(n-j)l(a)\\
    &\le (m+n+j)L_\textswab{8}+\frac{n-j}{2}L_\textswab{8}\\
    &=\frac{2m+3n+j}{2}L_\textswab{8},
\end{align*}
where
\begin{align*}
    \frac{2m+3n+j}{2}&=M+\frac{n+j}{2}\\
    &\le M+\frac{3}{2}\sqrt{M+\frac{1}{2}}\\
    &\le \left(k+\frac{1}{4}\right)^\frac{1}{2}+\frac{3}{\sqrt{2}}\left(k+\frac{1}{4}\right)^\frac{1}{4}-\frac{1}{2}.
\end{align*}
Finally, it is easy to find
\[
\left(k+\frac{1}{4}\right)^\frac{1}{2}+\frac{3}{\sqrt{2}}\left(k+\frac{1}{4}\right)^\frac{1}{4}-\frac{1}{2}<\sqrt{k}+\frac{3}{\sqrt{2}}\sqrt[4]{k},
\]
which proves the lemma.
\end{proof}

\begin{theorem}
\label{s_k L_8}
    On a compact hyperbolic surface $X$ with (possibly empty) geodesic boundary, for $k>0$, we have
    \[
    s_k(X) \le \left(\left(k+\frac{1}{4}\right)^\frac{1}{2}+\frac{3}{\sqrt{2}}\left(k+\frac{1}{4}\right)^\frac{1}{4}-\frac{1}{2}\right) L_\textswab{8}(X).
    \]
    In particular,
    \[
    s_k(X)< \left( \sqrt{k}+\frac{3}{\sqrt{2}}\sqrt[4]{k} \right) L_\textswab{8}(X).
    \]
\end{theorem}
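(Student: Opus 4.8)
The plan is to produce, for each $k>0$, an explicit $k$-geodesic on $X$ whose length obeys the stated bound; since $s_k(X)$ is by definition the infimum of lengths over all $k$-geodesics, the same bound then passes to $s_k(X)$. The first thing I would check is that the construction actually realizes self-intersection number exactly $k$. With the parameter choices $m+n=M$ and $j=a-n^2+1$, substituting into the formula of Lemma~\ref{self intersection number of w} gives
\[
i(w(m,n,j))=(m+n)^2+n^2+m+n+j-1=M^2+M+a=M(M+1)+a=k,
\]
so $\gamma(m,n,j)$ is indeed a $k$-geodesic. (One should also note that the embedded pair of pants $P$ is $\pi_1$-injective in $X$, so this combinatorial self-intersection number equals the self-intersection number of the class in $X$.) Given this, the length bound is exactly the content of the preceding Lemma \emph{whenever} $n\ge j$, so that case is already done.

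The main work is the complementary case $n<j$, which the preceding Lemma does not cover: there the quantity $(n-j)l(a)$ appearing in the estimate has a negative coefficient, and the inequality $l(a)\le \tfrac12 L_\textswab{8}$ points the wrong way. To handle it I would switch to the symmetric word $w'(m,n,j)=(ba)^m(b^2a)^na^j$, obtained from $w(m,n,j)$ by interchanging $a$ and $b$. This interchange is induced by a homeomorphism of $P$, so $i(w'(m,n,j))=i(w(m,n,j))=k$ and $\gamma'(m,n,j)$ is again a $k$-geodesic. Running the same length computation for $w'$ yields
\[
l(\gamma'(m,n,j))\le l(w'(m,n,j))=(m+n+j)l(ab)+(n-j)l(b),
\]
and now, since $n-j<0$ while $l(b)\ge \tfrac12 L_\textswab{8}$ (because $l(a)\le l(b)$ and $l(a)+l(b)=L_\textswab{8}$), the term $(n-j)l(b)$ is bounded above by $\tfrac{n-j}{2}L_\textswab{8}$. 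This reproduces the identical bound $\tfrac{2m+3n+j}{2}L_\textswab{8}$ and hence the same final estimate as in the Lemma. Thus in every case $k>1$ one of $w,w'$ supplies a $k$-geodesic meeting the bound.

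It remains only to dispose of $k=1$, which was excluded in the Lemma (the parameters degenerate to $(M,n)=(0,1)$, forcing $m=-1$). Here I would invoke Buser's theorem directly: a closed geodesic with exactly one self-intersection is a figure eight curve, so $s_1(X)=L_\textswab{8}(X)$, and since the coefficient $(k+\tfrac14)^{1/2}+\tfrac{3}{\sqrt2}(k+\tfrac14)^{1/4}-\tfrac12$ exceeds $1$ at $k=1$, the bound holds trivially. Combining the three cases and taking the infimum defining $s_k(X)$ proves the theorem, and the stated simplification to $\sqrt{k}+\tfrac{3}{\sqrt2}\sqrt[4]{k}$ is the elementary inequality already recorded in the Lemma. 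I expect the symmetric-word case $n<j$ to be the only genuine obstacle, precisely because it requires reversing the roles of $a$ and $b$ and exploiting $l(b)\ge\tfrac12 L_\textswab{8}$ rather than $l(a)\le\tfrac12 L_\textswab{8}$; verifying that $w'$ retains the self-intersection count $k$ is what makes this step work.
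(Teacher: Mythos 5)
Your proposal is correct and takes essentially the same route as the paper: the identical parameter choices $(M,a,n,m,j)$, the case $k=1$ handled trivially, the word $w(m,n,j)$ when $n\ge j$, and the symmetric word $w'(m,n,j)$ when $n<j$. You in fact supply details the paper leaves implicit --- the verification that $i(w(m,n,j))=M(M+1)+a=k$, and the length estimate for $w'$ via $(n-j)l(b)\le \tfrac{n-j}{2}L_\textswab{8}(X)$ using $l(b)\ge\tfrac12 L_\textswab{8}(X)$ --- all of which are accurate.
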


\begin{proof}
    The case $k=1$ is trivial. For $k>1$, choose $(m,n,j)$ as above.
    
    If $n\ge j$, the length of the geodesic $\gamma(m,n,j)$ gives the claimed upper bound.
    
    If $n<j$, the word
    \[
    w'(m,n,j)=(ba)^m(b^2a)^na^j
    \]
    defines a geodesic $\gamma'(m,n,j)$, whose length gives the same upper bound.
\end{proof}

Let $I_k(X)$ denote the maximal self-intersection number of a shortest geodesic on $X$ with at least $k$ self-intersections. Erlandsson and Parlier \cite{erlandsson2020short} proved the following.

\begin{theorem}[\cite{erlandsson2020short}]
\label{EP}
    Let $X$ be an orientable complete hyperbolic surface with non-abelian fundamental group. Then
    \[
    I_k(X)\le \left( 32 \sqrt{k+\frac{1}{4}} +1 \right) \left( 16 \sqrt{k+\frac{1}{4}} +1 \right).
    \]
\end{theorem}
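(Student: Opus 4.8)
The plan is to split the bound on $I_k(X)$ into a \emph{length} input and a \emph{combinatorial–geometric} input. Choose a shortest closed geodesic $\delta$ with $i(\delta)\ge k$ that realizes the largest self-intersection number among all such geodesics, so that $I_k(X)=i(\delta)$ and $l(\delta)$ equals the minimal length of a geodesic with at least $k$ self-intersections. Every geodesic with exactly $k$ self-intersections has at least $k$ self-intersections, so $\delta$ is no longer than a shortest $k$-geodesic; hence $l(\delta)\le s_k(X)$, and Basmajian's estimate gives $l(\delta)\le 2\sqrt{k+\tfrac{1}{4}}\,L_\textswab{8}(X)$. It therefore suffices to bound $i(\delta)$ in terms of the scale-invariant ratio $l(\delta)/L_\textswab{8}(X)$.

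The heart of the matter is the geometric estimate
\[
    i(\gamma)\le\left(\frac{16\,l(\gamma)}{L_\textswab{8}(X)}+1\right)\left(\frac{8\,l(\gamma)}{L_\textswab{8}(X)}+1\right)
\]
for the closed geodesics in play. I would prove it by lifting to the Poincaré disk $\mathbb D$ and encoding the self-intersections of $\gamma$ as crossings between the axis of a chosen lift and its translates under $\pi_1(X)$, counted with the crossing point inside one fundamental segment for the deck transformation of $\gamma$ (the standard axis/double-coset description of the self-intersection number). Each such crossing exhibits a figure-eight subconfiguration of $\gamma$ whose length is bounded below by $L_\textswab{8}(X)$; this is the mechanism that turns the global lower bound $L_\textswab{8}(X)$ into a spacing constraint on the crossings. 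A packing argument then shows that the crossing translates cannot accumulate, so their number grows at most quadratically in $l(\gamma)/L_\textswab{8}(X)$, and grouping them according to the two strands meeting at each crossing produces the two linear factors. I expect the main obstacle to be exactly the extraction of the sharp constants $16$ and $8$ (equivalently the product $128$): one must control how densely figure eights can be packed along $\gamma$ subject to the universal length lower bound, and this collar/injectivity-type estimate is where essentially all of the hyperbolic geometry is consumed.

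To finish, substitute the length bound into the geometric estimate. Since $16\cdot 2=32$ and $8\cdot 2=16$, the inequality $l(\delta)/L_\textswab{8}(X)\le 2\sqrt{k+\tfrac{1}{4}}$ gives
\[
    I_k(X)=i(\delta)\le\left(32\sqrt{k+\tfrac{1}{4}}+1\right)\left(16\sqrt{k+\tfrac{1}{4}}+1\right),
\]
which is the claimed bound; note that the factor $L_\textswab{8}(X)$ cancels, so the estimate is universal over $X$. I expect the length reduction and this final assembly to be routine, with all the real difficulty concentrated in the geometric estimate and the precise values of its constants.
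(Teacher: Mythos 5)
Your outer scaffolding is correct and matches Erlandsson--Parlier's actual argument in shape: reduce $l(\delta)\le s_k(X)$, apply the length bound $s_k(X)\le 2\sqrt{k+\tfrac14}\,L_\textswab{8}(X)$, and feed the ratio into a quadratic self-intersection estimate; the final arithmetic, $(8\cdot 2)\sqrt{k+\tfrac14}$ and $(16\cdot 2)\sqrt{k+\tfrac14}$, checks out. But the entire content of the theorem sits in the estimate $i(\gamma)\le\bigl(16\,l(\gamma)/L_\textswab{8}+1\bigr)\bigl(8\,l(\gamma)/L_\textswab{8}+1\bigr)$, and you do not prove it: you sketch an axis-crossing/packing mechanism and then explicitly defer the extraction of the constants $16$ and $8$, which is precisely where the theorem lives. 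Moreover, the mechanism as stated is doubtful: the observation that ``each crossing exhibits a figure-eight subconfiguration of length $\ge L_\textswab{8}$'' gives only the single global inequality $l(\gamma)\ge L_\textswab{8}$ for a non-simple geodesic (the two loops at any one crossing together form a figure eight made of all of $\gamma$), not a per-crossing spacing constraint on translates of the axis, so no quadratic count follows from it directly. To limit how many crossings can cluster, one must convert \emph{pairs} of nearby crossings into a short non-simple closed curve, and that requires a subdivision of $\gamma$ into arcs of controlled length, which your sketch never introduces.

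That subdivision is exactly how Erlandsson--Parlier proceed, as this paper records in its proof of Theorem~\ref{I_k}: cut the geodesic into $m$ arcs, each of length at most $\tfrac18 L_\textswab{8}$, so that $m<8\,l(\gamma)/L_\textswab{8}+1$, and then show $i(\gamma)\le 2m^2-m$ by bounding the number of times two such short arcs can cross --- too many crossings between arcs of length $\le \tfrac18 L_\textswab{8}$ would assemble a closed curve with a self-intersection of length $<L_\textswab{8}$, contradicting the minimality of the figure eight. Since $2m^2-m=m(2m-1)<\bigl(8\,l/L_\textswab{8}+1\bigr)\bigl(16\,l/L_\textswab{8}+1\bigr)$, this is exactly your claimed estimate, so your proposal is the right theorem with its key lemma left as an IOU rather than a wrong approach; but as written it is a genuine gap, not a proof. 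One further point worth a sentence in any complete write-up: the Basmajian bound is stated in this paper for compact surfaces with (possibly empty) geodesic boundary, whereas the theorem hypothesizes a complete orientable surface with non-abelian fundamental group; one needs the observation that the construction takes place inside the embedded pair of pants of a shortest figure eight, so the length bound persists in that generality.
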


Using Theorem~\ref{s_k L_8}, we improve the linear coefficient from $512$ to $128$.

\begin{theorem}
\label{I_k}
    On a complete hyperbolic surface $X$, we have
    \begin{align*}
        &I_k(X)\le\\
        &\left( 8\left(k+\frac{1}{4}\right)^\frac{1}{2}+12\sqrt{2}\left(k+\frac{1}{4}\right)^\frac{1}{4}-3 \right) \left( 16\left(k+\frac{1}{4}\right)^\frac{1}{2}+24\sqrt{2}\left(k+\frac{1}{4}\right)^\frac{1}{4}-7 \right).
    \end{align*}
    In particular,
    \[
    I_k(X)< 32\left( \left( 4k+1 \right)^\frac{1}{2} +3 \left( 4k+1 \right)^\frac{1}{4} \right)^2.
    \]
    Thus,
    \[
    \lim_{k\to\infty}\frac{I_k(X)}{k}\le 128.
    \]
\end{theorem}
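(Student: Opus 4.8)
The plan is to exploit the modular structure of Erlandsson--Parlier's proof of Theorem~\ref{EP}. Their argument decouples into two independent ingredients: a purely geometric upper bound on the self-intersection number of an \emph{arbitrary} closed geodesic in terms of its length, and the Basmajian-type length estimate $s_k(X)\le 2\sqrt{k+\tfrac14}\,L_\textswab{8}(X)$. The geometric ingredient has the form
\[
i(\gamma) \le \left(16\,\frac{l(\gamma)}{L_\textswab{8}(X)}+1\right)\left(8\,\frac{l(\gamma)}{L_\textswab{8}(X)}+1\right),
\]
and Theorem~\ref{EP} is recovered by substituting the length estimate, at which point the two occurrences of $L_\textswab{8}$ cancel (this is why Theorem~\ref{EP} is length-free). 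My improvement keeps the geometric ingredient untouched and simply feeds in the sharper length bound of Theorem~\ref{s_k L_8} in place of Basmajian's.

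First I would fix a geodesic $\gamma$ realizing $I_k(X)$, i.e.\ a shortest closed geodesic with at least $k$ self-intersections attaining the maximal self-intersection number, so that $I_k(X)=i(\gamma)$. Since any closed geodesic with exactly $k$ self-intersections is a competitor in this length minimization, we have $l(\gamma)\le s_k(X)$. Writing $R=l(\gamma)/L_\textswab{8}(X)$, the geometric ingredient and monotonicity give $I_k(X)\le(16R+1)(8R+1)$, while Theorem~\ref{s_k L_8} bounds
\[
R \le \left(k+\tfrac14\right)^{1/2}+\tfrac{3}{\sqrt2}\left(k+\tfrac14\right)^{1/4}-\tfrac12 .
\]

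Next I would carry out the substitution. A direct computation gives
\[
8R+1 \le 8\left(k+\tfrac14\right)^{1/2}+12\sqrt2\left(k+\tfrac14\right)^{1/4}-3, \qquad 16R+1 \le 16\left(k+\tfrac14\right)^{1/2}+24\sqrt2\left(k+\tfrac14\right)^{1/4}-7,
\]
which are exactly the two factors in the displayed bound. To obtain the clean form and the limit, I would change variables to $S=(4k+1)^{1/2}+3(4k+1)^{1/4}$; using $(k+\tfrac14)^{1/2}=\tfrac12(4k+1)^{1/2}$ and $(k+\tfrac14)^{1/4}=2^{-1/2}(4k+1)^{1/4}$, the two factors collapse to $4S-3$ and $8S-7$. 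Then
\[
(4S-3)(8S-7)=32S^2-52S+21<32S^2,
\]
since $S>\tfrac{21}{52}$ (indeed $S>5$ for $k\ge 1$), giving $I_k(X)<32\big((4k+1)^{1/2}+3(4k+1)^{1/4}\big)^2$. As $k\to\infty$ we have $S^2\sim 4k$, so the right-hand side is asymptotic to $128k$, yielding $\lim_{k\to\infty} I_k(X)/k\le 128$.

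The main obstacle is the first paragraph rather than the arithmetic: I must verify that Erlandsson--Parlier's proof really is modular in the claimed way, namely that the factor $\sqrt{k+\tfrac14}$ enters their bound \emph{only} through the length estimate and not through any step of their intersection-counting (covering) argument, and that their geometric estimate genuinely carries the leading coefficients $16$ and $8$ in $l(\gamma)/L_\textswab{8}$. Once this modularity is confirmed, so that the length bound can be swapped cleanly, the factor-of-two gain from Theorem~\ref{s_k L_8} is squared into the factor-of-four improvement ($512\to128$) automatically. A secondary point is matching hypotheses: Theorem~\ref{s_k L_8} is stated for compact surfaces, so for surfaces with cusps I would either extend the pair-of-pants construction or invoke Erlandsson--Parlier's stronger cusped result \cite{erlandsson2020short} (limit $=1$), which already implies the stated bound.
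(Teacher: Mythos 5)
Your proposal is correct and takes essentially the same route as the paper: the paper's proof likewise extracts from Erlandsson--Parlier's argument the two internal bounds $m<8\,s_k/L_\textswab{8}+1$ and $I_k(X)\le 2m^2-m=m(2m-1)$ --- whose combination is exactly your ``geometric ingredient'' $\bigl(16R+1\bigr)\bigl(8R+1\bigr)$ with $R=s_k/L_\textswab{8}$ --- and then substitutes Theorem~\ref{s_k L_8}, so the modularity you flag as the main obstacle is precisely what the paper asserts and uses. Your remaining algebra (the factors $4S-3$ and $8S-7$ with $S=(4k+1)^{1/2}+3(4k+1)^{1/4}$, the bound $(4S-3)(8S-7)<32S^2$, and the asymptotic $32S^2\sim 128k$) checks out and reproduces the stated conclusions.
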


\begin{proof}
    In the proof of Theorem~\ref{EP}, Erlandsson and Parlier consider a partition of a shortest closed geodesic with at least $k$ self-intersections into $m$ segments, each of length at most $\frac{1}{8}L_\textswab{8}$. They obtain the following bounds
    \[
    m<8\frac{s_k}{L_\textswab{8}}+1,
    \]
    and
    \[
    I_k(X)\le 2m^2-m.
    \]
    Substituting our improved upper bound for $\frac{s_k}{L_\textswab{8}}$ from Theorem~\ref{s_k L_8} yields the claimed estimate.
\end{proof}


\clearpage
\bibliographystyle{alpha}
\bibliography{main}

@book{buser2010geometry,
  title={{Geometry and spectra of compact Riemann surfaces}},
  author={Buser, Peter},
  year={2010},
  publisher={Springer Science \& Business Media}
}

@inproceedings{diop2022self,
  title={{Self-intersection on Pair of Pants}},
  author={Diop, El Hadji Abdou Aziz and Gaye, Masseye},
  booktitle={Nonlinear Analysis, Geometry and Applications: Proceedings of the Second NLAGA-BIRS Symposium, Cap Skirring, Senegal, January 25--30, 2022},
  pages={429--454},
  year={2022},
  organization={Springer}
}

@article{basmajian21short,
  title={{Short geodesics on a hyperbolic surface. Recent advances in mathematics, 39--43, Ramanujan Math}},
  author={Basmajian, Ara},
  journal={Soc. Lect. Notes Ser},
  volume={21},
  year={2015}
}

@article{basmajian2013universal,
  title={{Universal length bounds for non-simple closed geodesics on hyperbolic surfaces}},
  author={Basmajian, Ara},
  journal={Journal of Topology},
  volume={6},
  number={2},
  pages={513--524},
  year={2013},
  publisher={Wiley Online Library}
}

@inproceedings{erlandsson2020short,
  title={{Short closed geodesics with self-intersections}},
  author={Erlandsson, Viveka and Parlier, Hugo},
  booktitle={Mathematical Proceedings of the Cambridge Philosophical Society},
  volume={169},
  number={3},
  pages={623--638},
  year={2020},
  organization={Cambridge University Press}
}

@article{cohen1987paths,
  title={{Paths of geodesics and geometric intersection numbers. I}},
  author={Cohen, Marshall and Lustig, Martin},
  journal={Combinatorial group theory and topology (Alta, Utah, 1984)},
  volume={111},
  pages={479--500},
  year={1987},
  publisher={Princeton Univ. Press Princeton, NJ}
}

@article{chas2012self,
  title={{Self-intersection numbers of curves in the doubly punctured plane}},
  author={Chas, Moira and Phillips, Anthony},
  journal={Experimental Mathematics},
  volume={21},
  number={1},
  pages={26--37},
  year={2012},
  publisher={Taylor \& Francis}
}

@article{tan1996self,
  title={{Self-intersections of curves on surfaces}},
  author={Tan, Ser Peow},
  journal={Geometriae Dedicata},
  volume={62},
  number={2},
  pages={209--225},
  year={1996},
  publisher={Springer}
}

\end{document}